\documentclass[12pt,reqno]{amsart}
\usepackage{amssymb}
\usepackage{eucal}
\setlength{\textwidth}{15.5cm}
\addtolength{\oddsidemargin}{-1.5cm}
\addtolength{\evensidemargin}{-1.5cm}

%\usepackage[notcite,notref]{showkeys}

%%%%%%%%%%%%%%%%%%%%%%%%%%%%%%%%%%%%%%%%%%%%%%%%%%%%%%%%%%%%%
\newcommand{\R}{\mathbb R}

\newcommand{\Z}{\mathbb Z}

\newcommand{\T}{\mathbb T}

%%%%%%%%%%%%%%%%%%%%%%%%%%%%%%%%%%%%%%%%%%%%%%%%%%%%%%%%%%%%%%%

%%%%%%%%%%%%%%%%%%%%%%%%%%%%%%%%%%%%%%%%%%%%%%%%%%%%%%%%%%%%%%%

\newcommand{\secao}[1]{\section{#1}\setcounter{equation}{0}}

\newtheorem{theorem}{\sc Theorem}[section]

\newtheorem{remark}[theorem]{\sc Remark}

%%%%%%%%%%%%%%%%%%%%%%%%%%%%%%%%%%%%%%%%%%%%%%%%%%%%%%%%%%%%%%%%

\title[Ill-posedness for the BBM]{On the ill-posedness result for the BBM equation}

\author[Mahendra Panthee]{Mahendra Panthee}

\address{Centro de Matem\'atica, Universidade do Minho\\4710-057, Braga, Portugal} 

\email{mpanthee@math.uminho.pt}

\thanks{This work was partially supported by the Research Center of 
Mathematics of the University of Minho, Portugal through the FCT Pluriannual Funding Program.}

\keywords{Cauchy Problem, Well-posedness, Ill-posedness,  KdV equation, BBM equation.}
\subjclass{35Q35, 35Q53.}

\begin{document}

\maketitle

%\medskip
\begin{abstract}
We prove that the initial value problem (IVP) for the BBM equation is ill-posed for data in $H^s(\R)$, $s<0$ in the sense that the flow-map $u_0\mapsto u(t)$ that associates to initial data $u_0$ the solution $u$  cannot be continuous at the origin from $H^s(\R)$ to even $\mathcal{D}'(\R)$  at any fixed $t>0$ small enough. This result is sharp.
\end{abstract}
%%%%%%%%%%%%%%%%%%%%%%%%%%%%%%%%%%%%%%%
\secao{Introduction}
%%%%%%%%%%%%%%%%%%%%%%%%%%%%%%%%%%%%%%%
Let us  consider the initial value problem (IVP)
\begin{equation}\label{ivp1}
\begin{cases}
u_{t}-u_{xxt}+u_x+uu_x =0,\quad x, t \in \R,\\
u(x,0)=u_0(x)
\end{cases}
\end{equation}
where $u=u(x,t)$ is a  real valued function. This model describes the propagation of one-dimensional, unidirectional small amplitude long waves in nonlinear dispersive media \cite{BBM}. This model, widely known as the Benjamin-Bona-Mahony (BBM) equation, is the regularized counterpart of the Korteweg-de Vries (KdV) equation and is extensively studied in the recent literature, see for example \cite{AABCW}, \cite{ABS}, \cite{BPS}, \cite{BT}  and references therein.

For simplicity and later use, let us define  $D_x := \frac1i \partial_x$.  Then the IVP \eqref{ivp1} can be written in the following form
\begin{equation}\label{ivp2}
\begin{cases}
iu_{t}=\varphi(D_x)u+\frac12 \varphi(D_x)u^2\\
u(x,0)=u_0(x),
\end{cases}
\end{equation}
where $\varphi(D_x)$ is the Fourier multiplier operator defined by 
\begin{equation}\label{phi-d}
\widehat{\varphi(D_x)u(\xi)} = \varphi(\xi)\hat{u}(\xi), \quad {\text{with}}\quad \varphi(\xi) = \frac{\xi}{1+\xi^2}.
\end{equation}

 Recently, Bona and Tzvetkov \cite{BT} proved that the IVP \eqref{ivp1} is globally well-posed in the $L^2$-based Sobolev spaces $H^s(\R)$ for $s\geq 0$. More precisely, they proved the following result.
 
\begin{theorem}\cite{BT}\label{well-posed}
Fix $s\geq 0$. For any $u_0\in H^s(\R)$, there exists a $T=T(\|u_0\|_{H^s})>0$ and a unique solution $u\in C([0, T]; H^s(\R))$ of the IVP \eqref{ivp1}.

Moreover, for $R>0$, let $\mathcal{B}_R$ denote the ball of radius $R$ centered at the origin in $H^s(\R)$ and let $T=T(R)>0$ denote a uniform existence time for the IVP \eqref{ivp1} with $u_0\in \mathcal{B}_R$. Then the correspondence $u_0\mapsto u$ that associates to $u_0$ the solution $u$ of the IVP \eqref{ivp1} with initial data $u_0$ is a real analytic mapping of $\mathcal{B}_R$ to $C([-T, T]; H^s(\R))$.
\end{theorem}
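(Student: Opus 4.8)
The plan is to exploit the order-one smoothing built into \eqref{ivp2}: since $\varphi(\xi)=\xi/(1+\xi^2)$ obeys $|\varphi(\xi)|\lesssim(1+\xi^2)^{-1/2}$, the multiplier $\varphi(D_x)$ maps $H^{s-1}(\R)$ boundedly into $H^s(\R)$, so the apparently first-order nonlinearity $\tfrac12\varphi(D_x)u^2$ is in fact a bounded, smoothing operation. This lets me treat \eqref{ivp2} as an ordinary differential equation $u_t=-i\varphi(D_x)\bigl(u+\tfrac12 u^2\bigr)$ posed in the Banach space $H^s(\R)$. I would first pass to the equivalent Duhamel formulation
\[
u(t)=S(t)u_0-\tfrac{i}{2}\int_0^t S(t-t')\,\varphi(D_x)u^2(t')\,dt',\qquad S(t):=e^{-it\varphi(D_x)},
\]
noting that $\varphi$ is real, so $S(t)$ is a unitary group on every $H^s(\R)$ and contributes no growth.

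The analytic heart of the argument is the bilinear estimate $\|\varphi(D_x)(fg)\|_{H^s}\lesssim\|fg\|_{H^{s-1}}\lesssim\|f\|_{H^s}\|g\|_{H^s}$, valid for all $s\ge0$; the first inequality is the smoothing just noted, while the second is a product estimate in $H^{s-1}$. At the endpoint $s=0$ it reduces to the pointwise bound $\|\widehat{fg}\|_{L^\infty}\le\|f\|_{L^2}\|g\|_{L^2}$ together with $\int_\R(1+\xi^2)^{-1}\,d\xi<\infty$, and the full range $s\ge0$ then follows from the fractional Leibniz rule. Inserting this bound into the map $\Phi$ defined by the right-hand side above, and using the unitarity of $S(t)$, gives $\|\Phi(u)-\Phi(v)\|_{C([0,T];H^s)}\lesssim T\,(\|u\|_{C([0,T];H^s)}+\|v\|_{C([0,T];H^s)})\,\|u-v\|_{C([0,T];H^s)}$. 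Choosing $T=T(\|u_0\|_{H^s})$ small turns $\Phi$ into a contraction on a ball of $C([0,T];H^s(\R))$, and its fixed point is the unique local solution, giving the first assertion. Because the nonlinearity is a genuine quadratic (hence entire) bounded map of $H^s(\R)$ into itself, the Picard iterates are polynomials in $u_0$ and the iteration series converges uniformly for $u_0\in\mathcal{B}_R$ and $|t|\le T(R)$; this is precisely what yields the real-analyticity of $u_0\mapsto u$ from $\mathcal{B}_R$ into $C([-T,T];H^s(\R))$ claimed in the ``moreover'' part.

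Finally I would extend the local solution to all times. For $s\ge1$ this is immediate: multiplying the equation by $u$ and by $u_{xx}$ and integrating shows that $\int_\R(u^2+u_x^2)\,dx=\|u\|_{H^1}^2$ is conserved, so the $H^1$ norm stays bounded and the local theory can be reapplied on steps of fixed length to reach any time; persistence of higher regularity $s>1$ then follows since the nonlinear term is tame. I expect the genuine obstacle to lie in the range $0\le s<1$, where no conservation law controls the $H^s$ norm and the naive energy inequality $\tfrac{d}{dt}\|u\|_{H^s}^2\lesssim\|u\|_{H^s}^3$ only gives local-in-time control. To close this case I would split the data into a low-frequency part, whose evolution is controlled globally by the conserved $H^1$ energy, and a high-frequency remainder that is handled through the smoothing of $\varphi(D_x)$; making this decomposition quantitative so that the existence time does not collapse along any finite interval is the crux of the global statement.
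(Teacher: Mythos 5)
This theorem is quoted from Bona--Tzvetkov \cite{BT}; the present paper contains no proof of it, so your proposal can only be measured against the argument in \cite{BT} --- and for the statement actually asserted (local existence, uniqueness, analytic flow map) you follow essentially the same route: Duhamel formulation with the unitary group $S(t)=e^{-it\varphi(D_x)}$, the smoothing bound $|\varphi(\xi)|\le (1+\xi^2)^{-1/2}$, the bilinear estimate $\|\varphi(D_x)(fg)\|_{H^s}\lesssim \|f\|_{H^s}\|g\|_{H^s}$, a contraction on a ball of $C([0,T];H^s)$ with $T\sim \|u_0\|_{H^s}^{-1}$ (the estimates are time-symmetric and $S(t)$ is a group, so the same argument runs on $[-T,T]$), and analyticity from the fact that the fixed-point map is quadratic, i.e.\ the absolutely convergent series expansion that this paper itself invokes in \eqref{eq2.7}. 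Two corrections. First, your justification of the product estimate $\|fg\|_{H^{s-1}}\lesssim \|f\|_{H^s}\|g\|_{H^s}$ for $0<s<1$ via the fractional Leibniz rule is misplaced: there $s-1<0$, and Leibniz-type rules do not address negative-order norms. The estimate is nevertheless true, and your own $s=0$ argument extends uniformly to all $s\ge 0$: since $(1+\xi^2)^{s/2}\lesssim (1+(\xi-\xi_1)^2)^{s/2}(1+\xi_1^2)^{s/2}$ for $s\ge 0$, one gets $(1+\xi^2)^{(s-1)/2}|\widehat{fg}(\xi)|\lesssim (1+\xi^2)^{-1/2}(F*G)(\xi)$ with $F=(1+|\cdot|^2)^{s/2}|\hat f|$ and $G$ defined likewise, whence Cauchy--Schwarz ($\|F*G\|_{L^\infty}\le \|F\|_{L^2}\|G\|_{L^2}$) together with $\int_{\R}(1+\xi^2)^{-1}d\xi<\infty$ concludes; no Leibniz rule is needed. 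Second, your final paragraph on global continuation proves more than the statement claims: as quoted, Theorem \ref{well-posed} asserts only a local solution with $T=T(\|u_0\|_{H^s})$ (globality is mentioned in the surrounding text but not in the theorem), so the gap you candidly leave for $0\le s<1$ does not affect the proof of the statement; for the record, \cite{BT} close that case by exactly the quantitative high--low frequency splitting you sketch (evolve the smooth low-frequency piece, globally controlled through the conserved functional $\int_{\R}(u^2+u_x^2)\,dx$, and propagate the rough remainder using the smoothing of $\varphi(D_x)$), so your diagnosis of where the crux lies is accurate. A minor loose end: the contraction gives uniqueness only in a ball of $C([0,T];H^s)$, and upgrading to uniqueness in the full space requires the standard continuity-in-time and shrinking-interval argument, which you should state. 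Everything the present paper actually uses downstream --- the local analytic flow and the expansion \eqref{eq2.7} on a ball of $L^2$ --- is delivered by your argument.
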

 
 This Theorem improves the earlier known results by Benjamin et. al. \cite{BBM} where the IVP \eqref{ivp1} was shown to be globally well-posed  for data in $H^k$, $k\in \Z$ and $k\geq 1$.  Moreover, the authors in \cite{BT} also proved that the IVP \eqref{ivp1} for given data in $H^s(\R)$, $s<0$ is  ill-posed in the sense that, the flow map $u_0 \mapsto u(t)$ is not even $C^2$. The exact ill-posedness result proved in \cite{BT} reads as follows.
 
\begin{theorem}\cite{BT}\label{illposed-1}
For any $s<0$, $T>0$ the flow-map $u_0\mapsto u(t)$ established in Theorem \ref{well-posed} is not of class $C^2$ from $H^s$ to $C([0, T]; H^s(\R))$.
\end{theorem}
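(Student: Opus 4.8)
The plan is to prove the failure of $C^2$ smoothness of the flow map by contradiction, using the standard technique of examining the second-order term in the power series expansion of the solution with respect to the initial data. If the flow map $u_0 \mapsto u(t)$ were $C^2$ from $H^s$ to $C([0,T]; H^s(\R))$, then the second Fr\'echet derivative at the origin would be a bounded bilinear operator; applied to a fixed datum $u_0$ this would force the quadratic term in the Taylor expansion to satisfy an estimate of the form $\|A_2(u_0)\|_{H^s} \lesssim \|u_0\|_{H^s}^2$, uniformly. The strategy is to exhibit a family of initial data for which this inequality fails.

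First I would set up the expansion. Writing the solution associated to data $\delta u_0$ as a series $u = \sum_{k\geq 1} \delta^k u_k$ in powers of a small parameter $\delta$, and substituting into the Duhamel (integral) formulation of \eqref{ivp2}, one collects terms order by order. The first-order term $u_1$ solves the linear problem, so $u_1(t) = S(t)u_0$ where $S(t) = e^{-it\varphi(D_x)}$ is the linear BBM group. The second-order term $u_2$ is then given explicitly by the Duhamel formula
\begin{equation}\label{u2-formula}
u_2(t) = -\frac{i}{2}\int_0^t S(t-t')\,\varphi(D_x)\bigl(S(t')u_0\bigr)^2\,dt'.
\end{equation}
The $C^2$ hypothesis would yield $\|u_2(t)\|_{H^s} \leq C\|u_0\|_{H^s}^2$ for all small $u_0$, so it suffices to construct $u_0$ violating this.

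Next I would take the Fourier transform of \eqref{u2-formula}. Since $S(t)$ acts as multiplication by $e^{-it\varphi(\xi)}$ on the Fourier side and the square becomes a convolution, $\widehat{u_2}(\xi,t)$ reduces to a single frequency integral over $\xi_1 + \xi_2 = \xi$ whose integrand carries the multiplier $\varphi(\xi)$ and the oscillatory time factor $e^{-it(\varphi(\xi)-\varphi(\xi_1)-\varphi(\xi_2))}$; the $t'$-integration produces a denominator proportional to the resonance function $\varphi(\xi)-\varphi(\xi_1)-\varphi(\xi_2)$. The heart of the argument is the choice of test data: I would take $\hat{u_0}$ to be (essentially) the sum of two narrow bumps of height tuned so that $\|u_0\|_{H^s}$ stays of order one, located at two large, well-separated high frequencies $N$ and $N+1$ (or $N$ and $-N+1$) chosen so that the output frequency $\xi \approx 1$ lands in a region where $\varphi(\xi)$ is of order one while the $H^s$ norm of the input, weighted by $(1+\xi^2)^{s/2}$ with $s<0$, is tiny. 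Because $s<0$, placing mass at high frequencies costs almost nothing in $H^s$, yet the interaction produces output at a low frequency where the $H^s$ weight is large; one then computes that $\|u_2(t)\|_{H^s}/\|u_0\|_{H^s}^2 \to \infty$ as $N\to\infty$, for every fixed $t>0$ small.

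The main obstacle, and the step requiring the most care, is the resonance analysis: I must verify that the denominator $\varphi(\xi)-\varphi(\xi_1)-\varphi(\xi_2)$ does not vanish or become small enough to kill the lower bound, and that the oscillatory factor $e^{-it(\cdots)}$ does not force cancellation for the fixed times $t$ under consideration. Concretely, one needs a lower bound on $|u_2(t)|$ in a fixed low-frequency window, which means controlling the phase $t(\varphi(\xi)-\varphi(\xi_1)-\varphi(\xi_2))$ so that $|1 - e^{-it(\cdots)}|$ (or the relevant factor from the $t'$-integration) stays bounded below; this is where the specific form $\varphi(\xi)=\xi/(1+\xi^2)$ and the smallness of $t$ enter decisively. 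Once the frequency geometry is fixed to guarantee both a genuine order-one contribution at low output frequency and negligible input $H^s$ norm, the violation of the quadratic estimate is immediate, contradicting the assumed $C^2$ regularity.
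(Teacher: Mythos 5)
Your proposal follows essentially the same route as the paper: the $C^2$ hypothesis forces the quadratic bound on the second Picard iterate $u_2$ (the paper's \eqref{int-3} and \eqref{eq-bilin}), and the counterexample data you describe --- bumps at high frequency $\pm N$ tuned so the interaction outputs at $O(1)$ frequency where the $H^s$ weight, $s<0$, no longer discounts --- is exactly the Bona--Tzvetkov data \eqref{ex-1}. One point in your plan is stated backwards, though it self-corrects if you carry out the computation: you worry that the resonance denominator $\theta(\xi,\xi_1)=\varphi(\xi_1)+\varphi(\xi-\xi_1)-\varphi(\xi)$ might ``vanish or become small enough to kill the lower bound,'' but smallness of $\theta$ is precisely what you want --- the $t'$-integration produces $(e^{-it\theta}-1)/\theta$, which has a removable singularity at $\theta=0$ and modulus $\sim t$ whenever $|t\theta|\ll 1$ (this is the paper's \eqref{eq2.5}); the only genuine danger is $|\theta|\gtrsim 1/t$, i.e.\ oscillatory cancellation, which is your second, correct, worry. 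Relatedly, the paper sends the output to $|\xi|=o(1)$, where \eqref{def-theta} shows $\theta=o(1)$ automatically (near-resonance), whereas your target $\xi\approx 1$ gives $\theta\approx -\varphi(1)=-1/2$ of order one; this still yields the factor $\sim t$ for fixed small $t>0$, so your variant works, but it is not the near-resonant interaction and would fail if one needed uniformity over long times. With that correction, your outline matches the paper's argument in all essentials.
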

 
 This type of ill-posedness notion was first introduced by Bourgain \cite{Bo2} to show that the well-posedness result for the KdV 
equation in $H^s(\R),$ $s>-3/4$, is essentially optimal if one strengthens 
the usual notion of well-posedness by requiring the flow-map
$$
\phi\mapsto u_{\phi}(t), \quad \quad |t| < T,
$$
should act smoothly (for eg. $C^3$) on the space under consideration
(instead of just continuous). This notion of well-posedness seems to
be natural because, if one uses the contraction mapping principle to solve
the integral equation associated with the Cauchy problem, the flow-map
acts smoothly from $H^s$ to itself (see for eg.,  \cite{Bo1}, \cite{KPV3} and \cite{KPV2}).
Motivated by the work of Bourgain  \cite{Bo2}, Takaoka \cite{TAKA}
showed that the nonlinear Schr\"{o}dinger equation with derivative in a
nonlinear term is ill-posed in $H^s(\R),$ $s<1/2$. Utilizing the same
techniques Tzvetkov \cite{Tz} proved that the KdV equation is locally
ill-posed in ${H}^s(\R)$ for $s<-3/4$ if one requires only $C^2$
regularity of the flow-map in the notion of well-posedness.  Several dispersive models are proved to be ill-posed in certain function spaces using this notion, see for example \cite{GPS}, \cite{LP}, \cite{Mo}, \cite{MR}, \cite{MST-1}, \cite{MST-2} and references therein.

 Quite recently, motivated by the idea introduced by Bejenaru and Tao \cite{BejT}, Molinet and Vento \cite{MV} obtained a sharp ill-posedness result for the KdV-Burgers equation in $H^s$, $s<-1$ in the sense that the flow-map $u_0\mapsto u(t)$ cannot be continuous from $H^s(\R)$ to even $\mathcal{D}'(\R)$ at any fixed $t>0$ small enough. 
Following the scheme presented in Molinet and Vento \cite{MV}, we prove that the   IVP \eqref{ivp1} is ill-posed in the sense that the flow-map $u_0\mapsto u(t)$ is discontinuous at the origin from $H^s(\R)$ to even $\mathcal{D}'(\R)$, for any $s<0$. More precisely, we prove the following result.
\begin{theorem}\label{illposed-2}
Let $s<0$, then the IVP \eqref{ivp1} is ill-posed in $H^s(\R)$ in the following sense: there exists 
$T > 0$ such that for any $0 < t < T$, the flow-map $u_0 \mapsto u(t)$ constructed in Theorem \ref{well-posed} is discontinuous at the origin from $L^2(\R)$ endowed with the topology inducted by $H^s(\R)$ into $\mathcal{D}'(\R)$.
\end{theorem}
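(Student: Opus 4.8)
\emph{Strategy of proof.} The plan is to use the $L^2$ analyticity of the flow furnished by Theorem~\ref{well-posed} to expand the solution in a convergent multilinear series, to exhibit data that are $O(1)$ in $L^2$ but tend to $0$ in $H^s$, and to show that the quadratic term of the expansion produces, through a resonant high$\,\times\,$high$\,\to\,$low interaction, a fixed nonzero low-frequency profile in $\mathcal{D}'(\R)$ that the higher-order terms cannot cancel for $t$ small.

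First I would recast \eqref{ivp2} in Duhamel form. Writing $S(t)=e^{-it\varphi(D_x)}$ for the linear group (unitary on every $H^\sigma(\R)$), a solution obeys $u(t)=S(t)u_0+B(u,u)(t)$ with
\[
B(u,v)(t):=-\frac{i}{2}\int_0^t S(t-t')\,\varphi(D_x)\big(u(t')v(t')\big)\,dt'.
\]
Since $\varphi\in L^2(\R)\cap L^\infty(\R)$, Plancherel together with $\|\widehat{uv}\|_{L^\infty}\le\|u\|_{L^2}\|v\|_{L^2}$ gives $\|B(u,v)\|_{C([0,T];L^2)}\lesssim T\,\|u\|_{C([0,T];L^2)}\|v\|_{C([0,T];L^2)}$. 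Hence for $\|u_0\|_{L^2}\le R$ and $T=T(R)$ small the Picard series $u=\sum_{k\ge1}u_k$, with $u_1=S(t)u_0$ and $u_k=\sum_{j=1}^{k-1}B(u_j,u_{k-j})$, converges in $C([0,T];L^2)$ with $\|u_k\|_{C([0,T];L^2)}\lesssim(CT)^{k-1}\|u_0\|_{L^2}^k$, and by Theorem~\ref{well-posed} its sum coincides with the solution. Each $u_k$ is $k$-linear in $u_0$, so the problem reduces to analysing $u_1,u_2$ and controlling the tail in $\mathcal{D}'$.

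For the data I would fix $\gamma,\delta>0$ and set $\widehat{u_{0,N}}(\xi)=\gamma\big(\mathbf{1}_{I_N}(\xi)+\mathbf{1}_{-I_N}(\xi)\big)$, $I_N=[N,N+\delta]$, so $u_{0,N}$ is real, $\|u_{0,N}\|_{L^2}^2\simeq 2\gamma^2\delta$ is bounded in $N$, while $\|u_{0,N}\|_{H^s}^2\simeq 2\gamma^2\delta\,N^{2s}\to0$ because $s<0$ — this is exactly where, and sharply why, the hypothesis $s<0$ enters. The linear term is harmless: $\widehat{u_{1,N}(t)}$ is supported in $\{|\xi|\ge N\}$, so $\langle u_{1,N}(t),\psi\rangle\to0$ for every $\psi\in C_c^\infty(\R)$ by the rapid decay of $\widehat{\psi}$. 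The quadratic term $u_{2,N}=B(u_{1,N},u_{1,N})$ has
\[
\widehat{u_{2,N}(t)}(\xi)=-\frac{i}{2}\,\varphi(\xi)\,e^{-it\varphi(\xi)}\int_{\R}\frac{e^{it\chi(\xi,\xi_1)}-1}{i\,\chi(\xi,\xi_1)}\,\widehat{u_{0,N}}(\xi_1)\,\widehat{u_{0,N}}(\xi-\xi_1)\,d\xi_1,
\]
where $\chi(\xi,\xi_1)=\varphi(\xi)-\varphi(\xi_1)-\varphi(\xi-\xi_1)$. The surviving low frequencies come only from pairing the bump near $+N$ with that near $-N$: there $\varphi(\xi_1)$ and $\varphi(\xi-\xi_1)$ are each $O(1/N)$ but nearly cancel, so $\chi(\xi,\xi_1)=\varphi(\xi)+O(N^{-2})$, and I expect
\[
\widehat{u_{2,N}(t)}(\xi)\longrightarrow -\gamma^2(\delta-|\xi|)\,\big(1-e^{-it\varphi(\xi)}\big)\,\mathbf{1}_{[-\delta,\delta]}(\xi)=:\widehat{g_t}(\xi).
\]
Although the factor $\varphi(\xi)$ kills the output at $\xi=0$, the profile $g_t$ is odd-like near the origin, so testing against $\psi\in C_c^\infty(\R)$ with $\widehat{\psi}'(0)\neq0$ gives $\langle u_{2,N}(t),\psi\rangle\to\langle g_t,\psi\rangle\simeq c\,t\,\gamma^2\delta^4\neq0$.

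The hard part will be the cubic-and-higher tail $w_N:=\sum_{k\ge3}u_{k,N}$, which must be shown not to cancel $g_t$ in $\mathcal{D}'$. I would exploit two independent gains. On the one hand every factor $B$ carries a time integration, so $u_{k,N}=O(t^{k-1})$ and, $\|u_{0,N}\|_{L^2}$ being bounded, the series converges uniformly in $N$ on a fixed $[0,T]$, with the $\mathcal{D}'$-contribution of $w_N$ smaller than that of $u_{2,N}$ by a factor $O(t^2)$ as $N\to\infty$. On the other hand the $\mathcal{D}'$-surviving low frequencies are generated only by resonant high$\,\times\,$high$\,\to\,$low interactions: $u_{3,N}=2B(u_{1,N},u_{2,N})$ is supported away from the origin (a frequency-$N$ factor against a function with low- and frequency-$2N$ parts) and hence tends to $0$ in $\mathcal{D}'$, while the genuinely low-frequency pieces of $u_{4,N},u_{6,N},\dots$ carry further powers of $t$. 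Making this quantitative and uniform in $N$ — a bilinear estimate tracking both the frequency localisation and the gain in $t$, using the boundedness and the $O(1/|\xi|)$ decay of the multiplier $\varphi$, i.e. the weak BBM dispersion — is the technical crux. Granting it and fixing $t<T$ small, one obtains $\lim_{N\to\infty}\langle u_N(t),\psi\rangle=\langle g_t,\psi\rangle+O(t^3)\neq0$ while $u_{0,N}\to0$ in $H^s(\R)$, contradicting continuity of $u_0\mapsto u(t)$ at the origin into $\mathcal{D}'(\R)$ and proving the theorem.
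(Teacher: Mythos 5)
Your proposal follows, in outline, exactly the paper's scheme (the Bejenaru--Tao/Molinet--Vento approach): renormalized two-bump data $\widehat{\phi_N}(\xi)=\chi_{I_N}(\xi)+\chi_{I_N}(-\xi)$, bounded in $L^2$ and vanishing in $H^s$ for $s<0$; a Picard/analytic expansion of the flow justified by Theorem \ref{well-posed}; an $O(t)$ low-frequency output of the second iterate produced by the high\,$\times$\,high\,$\to$\,low interaction where the resonance function nearly vanishes (your computation $\chi(\xi,\xi_1)=\varphi(\xi)+O(N^{-2})$ on the cross pairing is correct, and your explicit limit profile $g_t$ is the test-function form of the paper's $H^s$ lower bound \eqref{eq2.4}--\eqref{eq2.6}); negligibility of the high-frequency linear term; and control of the remainder of the series.

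The one real difference is how the tail is killed, and there you both diverge from the paper and leave your argument conditional. The paper fixes $t$ and introduces a small amplitude $\epsilon$: by \eqref{eq2.7} the series $\sum_{k\ge2}\epsilon^k I_k(\phi_N^k,t)$ converges absolutely for $\|\phi_N\|_{L^2}\le 1$, $|\epsilon|\le\epsilon_0$, so the tail is $\le C\epsilon^3$ uniformly in $N$ and $t\in[0,T]$ (\eqref{eq2.10}), while the quadratic term is $\ge C_0\epsilon^2$ and the linear term is $O(\epsilon N^s)$; taking $\epsilon$ small and then $N$ large yields \eqref{eq2.12}--\eqref{eq2.13}, with no structural analysis of any iterate beyond the second. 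You instead fix the amplitude and gain smallness from $t$; this is viable, but the step you defer as ``the technical crux \dots Granting it'' is not actually a gap requiring a bilinear estimate that tracks frequency localisation, nor any resonance analysis of $u_{3,N}$: the crude bound you already proved, $\|B(u,v)\|_{C([0,t];L^2)}\lesssim t\,\|u\|\,\|v\|$, gives by induction $\|u_{k,N}(t)\|_{L^2}\le \gamma'(Ct\gamma')^{k-1}$ with $\gamma'=\|u_{0,N}\|_{L^2}$ bounded in $N$, hence $\|w_N(t)\|_{L^2}\lesssim t^2$ uniformly in $N$ for $t$ small, and then $|\langle w_N(t),\psi\rangle|\le\|w_N(t)\|_{L^2}\|\psi\|_{L^2}\lesssim t^2 \ll c_\psi\, t \sim |\langle g_t,\psi\rangle|$ once $0<t<T$ with $T$ small. (Two small corrections: the tail gains a factor $O(t)$ over the quadratic term, not $O(t^2)$; and the frequency-support argument for $u_{3,N}$, while true, is superfluous.) With that observation inserted your proof closes, and it is essentially the paper's argument with the roles of the two small parameters --- the paper's amplitude $\epsilon$ at fixed $t$, versus your time $t$ at fixed amplitude --- interchanged; the paper's version has the mild advantage of quoting the analyticity radius from Theorem \ref{well-posed} wholesale instead of re-deriving the series bounds.
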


 This result improves the ill-posedness result proved in Bona and Tzvetkov \cite{BT} and is sharp.

The remainder of this article is organized as follows. In the rest of this section we will introduce the notations that will be used throughout this work. In Section \ref{sec2} we will sketch the proof of the earlier ill-posedness result, viz. Theorem \ref{illposed-1}. Finally, Section \ref{sec3} is devoted to supply the proof of the main result of this work, viz. Theorem \ref{illposed-2}. \\

\noindent {\bf{Notation}}:
We denote by  $\mathcal{F}_x(f)(\xi)$ or $\hat{f}(\xi)$, the Fourier transform of $f$ in $x$ 
variable
\begin{equation*}
\mathcal{F}_x(f)(\xi)\equiv \hat{f}(\xi) := \frac1{\sqrt{2\pi}}\int_{\R}\,e^{-ix\xi}f(x)\,dx.
\end{equation*}
We use $H^s(\R)$ to denote the $L^2$-based Sobolev space of order $s$ with norm

\begin{equation*}
\|f\|_{H^s(\R)} =\Big(\int_{\R}(1+|\xi|^2)^{s}
|\hat{f}(\xi)|^2\,d\xi\Big)^{1/2}.
\end{equation*}
Various constants whose exact values are
immaterial will be denoted by $C$. Finally, we use the notation $A \lesssim B $ if there exists
a constant $C>0$ such that $A < C B$, $A\gtrsim B$ if there exists a
constant $C>0$ such that $A > C B$ and $A\sim B $ if $A\lesssim B $
and $A \gtrsim B$.\\

%%%%%%%%%%%%%%%%%%%%%%%%%%%%%%%%%%%%%%%%%%%%%%%%%%%%%%%%%%%
\secao{Earlier Ill-posedness Result}\label{sec2}
%%%%%%%%%%%%%%%%%%%%%%%%%%%%%%%%%%%%%%%%%%%%%%%%%%%%%%%%%%%

In this section we will describe, in brief, the ideas presented in \cite{BT} to prove the ill-posedness result stated in Theorem \ref{illposed-1}.

 For $u_0\in H^s(\R)$, consider the Cauchy problem
\begin{equation}\label{ivp3}
\begin{cases}
iu_{t}=\varphi(D_x)u+\frac12 \varphi(D_x)(u^2)\\
u(x,0)=\epsilon u_0(x),
\end{cases}
\end{equation}
where $\epsilon >0$ is a parameter. The solution $u^{\epsilon}(x,t)$ of \eqref{ivp3} depends on the parameter $\epsilon$. We can write \eqref{ivp3} in the equivalent integral equation form as
\begin{equation}\label{int-1}
u^{\epsilon}(x,t) = \epsilon S(t)u_0(x) - \frac i2\int_0^t S(t-t')\varphi(D_x))(u^2(x,t'))dt',
\end{equation}
where, $S(t) = e^{-it\varphi(D_x)}$ is the unitary group describing the solution of the linear part of the IVP \eqref{ivp3}.

Differentiating $u^{\epsilon}(x,t)$ in \eqref{int-1} with respect $\epsilon$ and evaluating at $\epsilon =0$ we get
\begin{equation}\label{int-2}
\frac{\partial u^{\epsilon}(x,t)}{\partial \epsilon}\Big|_{\epsilon=0} = S(t)u_0(x) =:u_1(x)
\end{equation}
and
\begin{equation}\label{int-3}
\frac{\partial^2 u^{\epsilon}(x,t)}{\partial \epsilon^2}\Big|_{\epsilon=0} = -i\int_0^t S(t-t')\varphi(D_x)(u_1^2(x,t'))dt' =:u_2(x).
\end{equation}

If the flow-map is $C^2$ at the origin from $H^s(\R)$ to $C([-T, T];H^s(\R))$, we must have
\begin{equation}\label{eq-bilin}
\|u_2\|_{L_T^{\infty}H^s(\R)}\lesssim \|u_0\|_{H^s(\R)}^2.
\end{equation}

The main idea to complete the proof of Theorem \ref{illposed-1} is to find an appropriate initial data $u_0$ for which the estimate \eqref{eq-bilin} fails to hold. For this, Bona and Tzvetkov \cite{BT} considered the following initial data defined via the Fourier transform
\begin{equation}\label{ex-1}
\widehat{u_0}(\xi) = \gamma^{-\frac12}N^{-s}[\chi_{I}(\xi) +\chi_{I}(-\xi)],
\end{equation}
where $I = [N-\gamma, N+\gamma]$ with $N\gg 1$ and $\gamma =N^{-\sigma}$, for $0<\sigma \ll 1$.
For this particular initial data, the estimate \eqref{eq-bilin} fails to hold for any $s<0$ thereby finishing the proof. 

In this work we will renormalize the above example and exploit the analyticity of the flow-map obtained in Theorem \ref{well-posed} to prove the main result, Theorem \ref{illposed-2}.

%%%%%%%%%%%%%%%%%%%%%%%%%%%%%%%%%%%%%%%%%%%%%%%%%%%%%%%%%%%%%%%%%%%%%
\section{ Proof of the Sharp Ill-posedness Result}\label{sec3}
%%%%%%%%%%%%%%%%%%%%%%%%%%%%%%%%%%%%%%%%%%%%%%%%%%%%%%%%%%%%%%%%%%%%%%

In this section we provide the proof of the main result of this work. As discussed earlier, we will  consider the renormalized form of the counter example constructed in Bona and Tzvetkov \cite{BT} and follow the scheme introduced in \cite{BejT} and \cite{MV} to accomplish the proof.

\begin{proof}[Proof of Theorem \ref{illposed-2}]
Let $N\gg 1$ and define $\phi_N$ via the Fourier transform as 
\begin{equation}\label{counter-1}
\widehat{\phi_N}(\xi) = \chi_{I_N}(\xi) +\chi_{I_N}(-\xi),
\end{equation}
where $I_N = [N-1, N+1]$.

Simple calculation shows that $\|\phi_N\|_{L^2(\R)} \sim 1$ and $\|\phi_N\|_{H^s(\R)}\to 0$, for any $s<0$.

As pointed out in the previous section, the second iteration in the Picard scheme is the following,
\begin{equation}\label{eq2.1}
I_2(h,h, t)= \int_0^tS(t-t') \varphi(D_x)[S(t')h]^2dt'.
\end{equation}

Now, using $\phi_N$ in place of $h$ and computing the Fourier transform in $x$, we obtain
\begin{equation}\label{eq2.2}
\begin{split}
\mathcal{F}_x(I_2(\phi_N,\phi_N, t))(\xi)&= \int_0^t\mathcal{F}_x[S(t-t') \varphi(D_x)[S(t')\phi_N]^2](\xi)dt'\\
&=\int_0^t e^{-i(t-t')\varphi(\xi)}\varphi(\xi)\mathcal{F}_x[S(t')\phi_N]^2 (\xi)dt'\\
&=\int_0^t e^{-i(t-t')\varphi(\xi)}\varphi(\xi) \int_{\R}e^{-it'\varphi(\xi_1)}\widehat{\phi_N}(\xi_1)e^{-it'\varphi(\xi-\xi_1)}\widehat{\phi_N}(\xi-\xi_1)d\xi_1dt'\\
&= \int_{\R} e^{-it\varphi(\xi)}\widehat{\phi_N}(\xi_1)\widehat{\phi_N}(\xi-\xi_1)\varphi(\xi) \int_0^te^{-it'\theta(\xi, \xi_1)}dt'd\xi_1,
\end{split}
\end{equation}
where, 
\begin{equation}\label{def-theta}
\theta(\xi, \xi_1):= \varphi(\xi_1)+\varphi(\xi-\xi_1)-\varphi(\xi) = \frac{\xi\xi_1(\xi-\xi_1)(\xi^2-\xi\xi_1+\xi_1^2+3}{(1+\xi_1^2)[1+(\xi-\xi_1)^2](1+\xi^2)}.
\end{equation}

Therefore, in view of \eqref{counter-1}
\begin{equation}\label{eq2.3}
\begin{split}
\mathcal{F}_x(I_2(t, \phi_N,\phi_N))(\xi)&= e^{-it\varphi(\xi)}i\varphi(\xi)\int_{\R} \widehat{\phi_N}(\xi_1)\widehat{\phi_N}(\xi-\xi_1) \frac{e^{-it\theta(\xi, \xi_1)} -1}{\theta(\xi, \xi_1)}d\xi_1\\
&=e^{-it\varphi(\xi)}i\varphi(\xi)\int_{\xi_1\in I_N\cup (-I_N)\atop{\xi-\xi_1\in I_N\cup (-I_N)}}\frac{e^{-it\theta(\xi, \xi_1)} -1}{\theta(\xi, \xi_1)}d\xi_1.
\end{split}
\end{equation}

Now we move to find a lower bound for $\|I_2(\phi_N,\phi_N,t)\|_{H^s}$. With the similar reasoning as in \cite{BT}, \cite{GPS} and \cite{Tz}, the main contribution to this norm comes from the combination of frequencies such that $|\theta(\xi, \xi_1)|$ is small. In fact,
\begin{equation}\label{eq2.4}
\begin{split}
\|I_2(\phi_N,\phi_N,t)\|_{H^s(\R)}^2 &\geq \int_{-\frac14}^{\frac14}(1+|\xi|^2)^s|\mathcal{F}_x(I_2( \phi_N,\phi_N,t))(\xi)|^2d\xi\\
&\geq \int_{-\frac14}^{\frac14}(1+|\xi|^2)^s|\varphi(\xi)|^2\Big|\int_{A_{\xi}} \frac{e^{-it\theta(\xi, \xi_1)} -1}{\theta(\xi, \xi_1)}d\xi_1\Big|^2d\xi,
\end{split}
\end{equation}
where 
$$A_{\xi} := \{\xi_1 : \xi_1\in I_N,\; \xi-\xi_1\in -I_N\quad {\text or},\quad \xi-\xi_1\in I_N,\; \xi_1\in -I_N\}.$$

Note that, for $\xi_1\in A_{\xi}$ we have $|\xi_1|\sim |\xi-\xi_1|\sim N$, $|\xi|\sim {\text{o}}(1)$ and  consequently $|\theta(\xi, \xi_1)|\sim {\text{o}}(1)$ . Therefore, for any $t>0$ fixed, we get
\begin{equation}\label{eq2.5}
\Big|\frac{e^{-it\theta(\xi, \xi_1)} -1}{\theta(\xi, \xi_1)}\Big|\geq C|t|.
\end{equation}

Also note that, for $|\xi|\sim {\text{o}}(1)$, with $\xi_1\in A_{\xi}$, ${\text{measure}}(A_{\xi}) \geq 1$. Hence, for any fixed $t>0$ and for some positive constant $C_0$, using \eqref{eq2.5} in \eqref{eq2.4}, we obtain
\begin{equation}\label{eq2.6}
\begin{split}
\|I_2(\phi_N,\phi_N,t)\|_{H^s(\R)} &\gtrsim |t|\Big(\int_{|\xi|\sim {\text{o}}(1)}(1+|\xi|^2)^s |\varphi(\xi)|^2d\xi\Big)^{\frac12}\\
&\gtrsim |t|\Big(\int_{|\xi|\sim {\text{o}}(1)}|\xi|^2d\xi\Big)^{\frac12}\\
&\geq C_0.
\end{split}
\end{equation}

By construction, $\|\phi_N\|_{H^s}\to 0$ for any $s<0$, therefore \eqref{eq2.6} ensures that for any fixed $t>0$, the application $u_0\mapsto I_2(u_0, u_0, t)$ is not continuous at the origin from $H^s(\R)$ to even $\mathcal{D}'(\R)$.

Now, our idea is to prove that the discontinuity of $u_0\mapsto I_2(u_0, u_0, t)$ at the origin implies the discontinuity of the flow-map $u_0\mapsto u(t)$.

From Theorem \ref{well-posed}, there exist $T>0$ and $\epsilon_0 >0$ such that for any $|\epsilon|\leq \epsilon_0$, any $\|h\|_{L^2(\R)}\leq 1$ and $0\leq t\leq T$, one has
\begin{equation}\label{eq2.7}
u(\epsilon h, t) = \epsilon S(t)h + \sum_{k=2}^{+\infty} \epsilon^k I_k(h^k, t),
\end{equation}
where $h^k:= (h, h, \cdots, h)$, $h^k\mapsto I_k(h^k, t)$ is a $k$-linear continuous map from $L^2(\R)^k$ into $C([0, T]; L^2(\R))$ and the series converges absolutely in $C([0, T]; L^2(\R))$.

From \eqref{eq2.7}, we have that
\begin{equation}\label{eq2.8}
u(\epsilon \phi_N, t) -\epsilon^2I_2(\phi_N, \phi_N, t) = \epsilon S(t)\phi_N + \sum_{k=3}^{+\infty} \epsilon^k I_k(\phi_N^k, t).
\end{equation}

Also, we have that
\begin{equation}\label{eq2.9}
\|S(t)\phi_N\|_{H^s(\R)}\leq \|\phi_N\|_{H^s(\R)} \sim N^s
\end{equation}
and
\begin{equation}\label{eq2.10}
\Big\|\sum_{k=3}^{+\infty} \epsilon^k I_k(\phi_N^k, t)\Big\|_{L^2(\R)}\leq \Big(\frac{\epsilon}{\epsilon_0}\Big)^3 \sum_{k=3}^{+\infty} \epsilon_0^k \|I_k(\phi_N^k, t\|_{L^2(\R)}
\leq C\epsilon^3.
\end{equation}

Therefore, from \eqref{eq2.8} in view of \eqref{eq2.9} and \eqref{eq2.10} we get, for any $s<0$,
\begin{equation}\label{eq2.11}
\sup_{t\in [0, T]}\|u(\epsilon\phi_N, t) -\epsilon^2 I_2(\phi_N, \phi_N, t)\|_{H^s(\R)}\leq O(N^s) +C\epsilon^3.
\end{equation}

Now, if we fix $0<t<1$, take $\epsilon$ small enough and then $N$ large enough, and take an account of  \eqref{eq2.6}; the estimate \eqref{eq2.11} yields that $\epsilon^2 I_2(\phi_N, \phi_N, t)$ is a good approximation of $u(\epsilon\phi_N, t)$ in $H^s(\R)$ for any $s<0$.

If we choose $\epsilon \ll 1$, from \eqref{eq2.8}, \eqref{eq2.9} and \eqref{eq2.10}, we get 
\begin{equation}\label{eq2.12}
\begin{split}
\|u(\epsilon\phi_N, t)\|_{H^s(\R)}&\geq \epsilon^2\|I_2(\phi_N, \phi_N, t\|_{H^s(\R)} -\epsilon\|S(t)\phi_N\|_{H^s(\R)}-\sum_{k=3}^{+\infty} \epsilon^k \|I_k(\phi_N^k, t\|_{H^s(\R)}\\
&\geq C_0\epsilon^2 -C_1\epsilon^3 - C\epsilon N^s\\
&\geq \frac{C_0}{2}\epsilon^2 -C\epsilon N^s.
\end{split}
\end{equation}

If we fix the $\epsilon \ll 1$ chosen earlier and choose $N$ large enough, then for any $s<0$, the estimate \eqref{eq2.12} yields,
\begin{equation}\label{eq2.13}
\|u(\epsilon\phi_N, t)\|_{H^s(\R)}\geq \frac{C_0}{4} \epsilon^2.
\end{equation}

Note that, $u(0, t) \equiv 0$ and $\|\phi_N\|_{H^s(\R)}\to 0$ for any $s<0$. Therefore, taking $N\to \infty$ we conclude that the flow-map $u_0\mapsto u(t)$ is discontinuous at the origin from $H^s(\R)$ to $C([0, 1]; H^s(\R)$, for $s<0$. Moreover, as $\phi_N \rightharpoonup 0$ in $L^2(\R)$, we also have that the flow-map is discontinuous from $L^2(\R)$ equipped with its weak topology inducted by  $H^s(\R)$ with values even in $\mathcal{D}'(\R)$.
\end{proof}

\begin{remark} In the periodic case, i.e., for $x \in \T$, there is analytical well-posedness result for given data in Sobolev spaces without zero Fourier mode ( i.e., with zero $x$-mean) $H^s(\T)$, $s\geq 0$, see \cite{DR}. Now, for $N\gg 1$, if we define a sequence of functions $a_n$, by
\begin{equation}\label{ex-5}
a_n =\begin{cases} 1, \quad |n|\sim N\\
	                 0, \quad {\mbox{otherwise}},
	   \end{cases}
\end{equation}
	                 and $\phi_N$ by  $\widehat{\phi_N}(n) = a_n$, then clearly
 $\|\phi_N\|_{L^2(\T)} \sim 1$ and  $\|\phi_N\|_{H^s(\T)} \to 0$, for any $s <0$.
If we proceed with the calculations exactly as above considering the Sobolev spaces $H^s(\T)$ without zero Fourier mode, we can obtain a similar ill-posedness result for $s<0$, in the periodic case too.
\end{remark}

\noindent{\bf Acknowledgment:} The author would like to thank N. Tzvetkov for discussions on this problem.

%%%%%%%%%%%%%%%%%%%%%%%%%%%%%%%%%%%%%%%%%%%%%%%%%%


\begin{thebibliography}{99}
%%%%%%%%%%%%%%%%%%%%%%%%%%%%%%%%%%%%%%%%%%%%%%%%%%

\small


\bibitem{AABCW} A. A. Alazman, J. P.  Albert, J. L.  Bona, M. Chen, J. Wu, {\em Comparisons between the BBM equation and a Boussinesq system}  Adv. Differential Equations  {\bf 11 } (2006), 121--166.

\bibitem{ABS} J. Angulo Pava, C. Banquet, M. Scialom, {\em Stability for the modified and fourth Benjamin-Bona-Mahony equations}, Preprint 2010.


\bibitem{BejT} I. Bejenaru, T. Tao, {\em Sharp well-posedness and ill-posedness results for a quadratic non-linear Schr\"odinger equation}, Jr. Functional Analysis, {\bf 233} (2006) 228--259.

\bibitem{BBM} T. B. Benjamin, J. L. Bona, J. J. Mahony, {\em Model equations for long waves in nonlinear dispersive systems}, Phil. Trans. Royal Soc. London, {\bf 272} (1972) 47--78.

\bibitem{BPS} J. L. Bona, W. G. Pritchard, L. R. Scott, {\em An evaluation of a model equation for water waves}, Philos. Trans. Royal Soc. London Series A, {\bf 302} (1981) 457--510.

\bibitem{BT} J. L. Bona, N. Tzvetkov, {\em Sharp well-posedness results for the BBM equation}, {Discrete and Continuous Dynamical Systems} {\bf 23} 4 (2009), 1241--1252.

\bibitem{Bo1} J. Bourgain, {\em Fourier transform restriction phenomena for certain lattice subsets and applications to nonlinear evolution equations}, {Geom. Funct. Anal.} {\bf 3} (1993), 107--156, 209--262.

\bibitem{Bo2} J. Bourgain, {\em Periodic Korteweg-de Vries equation with measures as initial data}, {Sel. Math., New Ser.} {\bf 3} (1997), 115--159.

\bibitem{GPS} A. Gr\"unrock, M. Panthee, J. D. Silva, {\em On KP-II type equations on cylinders},  Ann. Inst. H. Poincar\'e Anal. Non Lin\'eaire  {\bf 26}  (2009), 2335--2358.


\bibitem{KPV3} C. E. Kenig, G. Ponce and L. Vega, {\em Well-posedness and 
scattering results for the generalized Korteweg-de Vries equation via  the 
contraction principle}, {Comm. Pure Appl. Math.} {\bf 46} (1993), 527--620.

\bibitem{KPV2} C. E. Kenig, G. Ponce and L. Vega, {\em A bilinear estimate 
with application to the KdV equation}, {J. Amer. Math Soc.} {\bf 9} (1996),
 573--603.
 
 \bibitem{LP} F. Linares, M. Panthee, {\em On the Cauchy problem for a coupled system of KdV equations},  Commun. Pure Appl. Anal. {\bf  3}  (2004) 417--431.
 
\bibitem{Mo} L. Molinet, {\em Sharp ill-posedness result for the periodic Benjamin-Ono equation}  J. Funct. Anal., {\bf 257} (2009) 3488--3516.
 
 
\bibitem{MR} L. Molinet, F. Ribaud, {\em On the low regularity of the Korteweg-de Vries-Burgers equation}  Int. Math. Res. Not.,  (2002)  no. 37, 1979--2005.

\bibitem{MST-1} L. Molinet, J. C. Saut, N. Tzvetkov, {\em Ill-posedness issues for the Benjamin-Ono and related equations},  SIAM J. Math. Anal.  {\bf 33}  (2001) 982--988.
 
\bibitem{MST-2} L. Molinet, J. C. Saut, N. Tzvetkov, {\em Well-posedness and ill-posedness results for the Kadomtsev-Petviashvili-I equation},  Duke Math. J., {\bf 115}  (2002), 353--384.

\bibitem{MV} L. Molinet, S. Vento, {\em Sharp ill-posedness and well-posedness results for the KdV-Burgers equation: the real line case}, arXiv:0911.5256v2, Preprint 2009.

\bibitem{DR} D. Roumegoux, {\em On symplectic capacities for the BBM equation}, work in preparation.

\bibitem{TAKA} H. Takaoka, {\em Global well-posedness for Schr\"{o}dinger 
equations with derivative in a nonlinear term and data in low-order Sobolev 
spaces}, {Electronic Jr. Diff. Eqn.} {\bf 42}  (2001), 1--23.

\bibitem{Tz} N. Tzvetkov, {\em Remark on the local ill-posedness for KdV 
equation}, {C. R. Acad. Sci. Paris Ser. I} {\bf 329} (1999), 1043--1047.

\end{thebibliography}
\end{document}